\documentclass[10pt]{amsart}
\usepackage{amsmath}
\usepackage{amssymb, amsfonts, amsmath, amsthm, amscd}
\usepackage{mathrsfs}
\usepackage{color, graphics}
\usepackage[all]{xy}
\usepackage{hyperref}
\usepackage{amsxtra} 

\newtheorem{theorem}{Theorem}[section]

\newtheorem{claim}[theorem]{Claim}

\theoremstyle{remark}

\numberwithin{equation}{section}

\def\dim{\operatorname{dim}}%
\def\dim{\operatorname{dim}}%

\def\Ext{\operatorname{Ext}}

\def\Oc{\mathcal O}

\def\cL{\mathcal L}

\def\cU{\mathcal U}
\def\cG{\mathcal G}
\def\cF{\mathcal F}
\def\cH{\mathcal H}
\newcommand{\sA}{\mathscr{A}}
\newcommand{\sB}{\mathscr{B}}
\def\FF{\mathbb F}

\def\E{\mathcal E}
\def\Pp{\mathbb P}

\begin{document}

\title[Ulrich wildness of some threefold scrolls]{Ulrich wildness of some decomposable threefold scrolls over $\mathbb F_a$}

\author{Maria Lucia Fania}
\address{Dipartimento di Ingegneria e Scienze dell'Informazione e Matematica, Universit\`{a} degli Studi di L'Aquila,  Via Vetoio Loc. Coppito, 67100, L'Aquila, Italy}
\email{marialucia.fania@univaq.it}

\author{Flaminio Flamini}
\address{Dipartimento di Matematica, Universita' degli Studi di Roma Tor Vergata, Via della Ricerca Scientifica-00133 Roma, Italy}
\email{flamini@mat.uniroma2.it}

\subjclass[2020]{14N30, 14J30, 14J26, 14J60, 14C05}

\keywords{Ulrich bundles, moduli spaces, deformations, $3$-folds, ruled surfaces}

\begin{abstract} The paper deals with {\em Ulrich wildness} of decomposable threefold scrolls $X$ over Hirzebruch surfaces $\mathbb{F}_a$, for any $a \geqslant 0$. Our {\em Main Theorem} enstablishes that for $a=0$, the moduli space of rank-$r$ Ulrich bundles, for any $r \geqslant 2$ and of given Chern classes, contains a {\em generically smooth}, {\em unirational} component $\mathcal{M}(r)$ of computed dimension whose general point corresponds to a {\em slope-stable} Ulrich bundle; in particular $X$ turns out to be {\em Ulrich wild}. When $a \geqslant 1$ and in presence of {\em modular obstructions}, $X$ is nevertheless shown to be {\em Ulrich wild}.
\end{abstract}

\thanks{The authors would like to deeeply thank the organizers (F. Catanese and J.H. Keum) of the {\em ``46th Frontier Scientists Workshop: New trends in Algebraic and Complex Geometry"}, Ischia (Na)-Italy, 18-20 October 2025, in the framework of Accademia Nazionale dei Lincei and Korean Academy of Science and Technology, for the invitation of the second author as speaker at the workshop. The authors also thank F. Malaspina and J. Pons-Llopis for some useful conversations. The second author is member of  INdAM-GNSAGA. The second author has been partially supported by MIUR Excellence Department Project MatMod@TOV, MIUR CUP-E83C23000330006, 2023-2027,  awarded to the Department of Mathematics, University of Rome Tor Vergata.}

\maketitle

\section*{Introduction} Let $X \subset \mathbb{P}^N$ be a $n$-dimensional smooth projective variety with a hyperplane section divisor $H$. A vector bundle $\mathcal{U}$ on $X$ is called a \emph{$\mathcal O_X(H)$-Ulrich bundle} if $$H^*(\mathcal{U} \otimes \mathcal{O}_X(-iH)) = 0, \;\; 1 \leqslant i \leqslant n.$$The \emph{Ulrich complexity} of $X$ is defined as $$uc_H(X) := \min \{ r \in \mathbb{N}^* \mid \exists \, \mathcal{U} \text{ indecomposable Ulrich bundle, } \text{rk}(\mathcal{U})=r \}.$$Such bundles exhibit several key structural properties: they are closed under extensions; any such $\cU$ has an {\em Ulrich dual} bundle, $\mathcal{U}^U := \mathcal{U}^{\vee} \otimes \omega_X \otimes \mathcal{O}_X((n+1)H)$, of identical rank; moreover they are always ({\em Gieseker/slope}) {\em semistable}, and also {\em stable} if they are not extensions of lower-rank Ulrich bundles (cf. e.g. \cite{b,c-h-g-s}), so they fill up subsets of moduli spaces of semistable bundles. 

While well-characterized for curves and certain classes of surfaces, existence and rank classification remain largely open in dimension $n\geqslant 3$ (cf. e.g. \cite{CMP} for an overview).

The paper investigates Ulrich bundles on threefold scrolls over Hirzebruch surfaces $\mathbb{F}_a$, $a \geqslant 0$, $f$ denoting fiber class of $\FF_a$ whereas $C_{-}$ either the unique section of $\FF_a$ with $C_{-}^2 = -a$, if $a>0$, or the fiber of the other ruling, if $a=0$; so we denote line bundles on $\mathbb{F}_a$ as $\mathcal{O}_{\mathbb{F}_a}(\alpha, \beta) := \alpha C_{-} + \beta f$, $\alpha, \beta \in \mathbb Z$. 

In this set-up, taking the projective bundle $\varphi: \mathbb{P}_{\mathbb{F}_a}(\mathcal{E}_a^b) \to \mathbb{F}_a$, with $a, b \geqslant 0$ and $\mathcal{E}_a^b := \mathcal{O}_{\mathbb{F}_a} \oplus \mathcal{O}_{\mathbb{F}_a}(0, -b)$, let $\xi$ be the {\em tautological class}, $C_0 := \varphi^*(C_{-})$ and $F := \varphi^*(f)$, so that the {\em Chow ring} of $\mathbb{P}_{\mathbb{F}_a}(\mathcal{E}_a^b)$ is determined by $$\xi^2 = -b \xi F,\; C_0^2 = -a C_0 F, \; F^2=0\; {\rm and}\; \xi C_0 F=1.$$To ease notation, we also set:
\[\mathcal{O}_a(\alpha_1, \alpha_2, \alpha_3) := \alpha_1 \xi + \alpha_2 C_0 + \alpha_3 F \quad \text{and} \quad [\alpha_1, \alpha_2, \alpha_3] := \alpha_1 \xi C_0 + \alpha_2 \xi F + \alpha_3 C_0 F.
\] For any $c \in \mathbb{Z}$, we let $h = h_a := \mathcal{O}_a(1,1,c)$ which turns out to be very ample if and only if $c \geqslant a+b+1$ (cf. \cite[Remark\,2.1]{fa-fl-ma-pl}). Under this condition, one has the embedding:
\begin{equation}\label{eq:Xe}
    \Phi_{|h|}: \mathbb{P}_{\mathbb{F}_a}(\mathcal{E}_a^b) \hookrightarrow X \subset \mathbb{P}^N
\end{equation} so that $X$ is a {\em threefold scroll} over $\FF_a$ of degree $d = 3(2c-a-b)$, 
{\em canonical bundle} $\omega_X \cong \mathcal{O}_a(-2, -2, -(a+b+2))$, {\em sectional genus} $g = 2c - a - b - 1$, where $N:= h^0(\Pp_{\FF_a}(\E^b_a), h) -1 = 4c-2a-2b+3$.

Previous paper \cite{fa-fl-ma-pl} established a {\em multiple scroll structure} on $X$ and classified $h$-Ulrich line bundles on it, as follows. 

\bigskip

\noindent
\textsc{Theorem A} (cf. \cite[Prop.\,2.2,\,Cor.\,2.4,\,Thm.\;3.1]{fa-fl-ma-pl} ) {\em (1) For any integers $a,b \geqslant 0$ one has 
$$\Pp_{\FF_a}( \mathcal E_a^b = \Oc_{\FF_a} \oplus\Oc_{\FF_a}(0, -b)) \cong   \Pp_{\FF_{b}}(\mathcal E_b^a:= \Oc_{\FF_{b}}\oplus \Oc_{\FF_{b}}(0,-a)).$$In particular, for any $c \geqslant a+b+1$, threefold scrolls $X$ as in \eqref{eq:Xe} are endowed with a  
{\em multiple scroll structure} over both Hirzebruch surfaces $\FF_a$ and $\FF_b$ for which 
\begin{equation}\label{eq:motiva}
\mathcal O_a(1,1,c) = h_a = h = h_b = \mathcal O_b(1,1,c).
\end{equation}

\bigskip

\noindent
(2) For any $a,b \geqslant 0$ and $c \geqslant a+b+1$, the {\em Ulrich complexity} of $X$ is $uc_h(X)=1$. More precisely, isomorphism in (1) and simmetry in \eqref{eq:motiva} permit to assume $b\geqslant a \geqslant 0$ and to classify $h$-Ulrich line bundles on $X$ as follows: 

\medskip

\noindent
(i) for any $b \geqslant a > 0, \; c \geqslant a+b+1$, $X$ supports exactly two $h$-Ulrich line bundles, namely $N= \Oc_a(2,0,2c-a-1)$ and its Ulrich dual $N^U= \Oc_a (0,2,2c-b-1)$;

\medskip

\noindent
(ii) for any $b>0 =a, \; c \geqslant b+1$, $X$ supports exactly four $h$-Ulrich line bundles, namely $N$ and $N^U$   as in (i),  together with the two extra line bundles $L= \mathcal O_0(1,0,3c-b-1)$ and its Ulrich dual $L^U=  \Oc_0(1, 2,c-1)$;

\medskip

\noindent
(iii) if $a=b=0$ then, for any $c \geqslant 1$, $X$ supports exactly six $h$-Ulrich line bundles, namely $N, \; N^U, \; L, \; L^U$ as in (ii), together with the two extra line bundles $M=\Oc_0(2,1,c-1)$ and its Ulrich dual $M^U=\Oc_0(0,1,3c-1)$.
}

\bigskip

Inspired by representation theory of ACM-graded modules (cf. e.g. \cite{DG}), a smooth, projective variety $Y \subset \mathbb P^N$ is said to be {\em (geometrically)} \emph{$\mathcal O_Y(H)$-Ulrich wild} if it supports $p$-dimensional families of pairwise non-isomorphic, indecomposable $\mathcal O_Y(H)$-Ulrich bundles for arbitrarily large $p$; this property suggests a rich internal geometry of $Y$ (in particular the {\em Ulrich set} $Ur(Y)$ is large). In this circle of ideas, in \cite{fa-fl-ma-pl} the authors focused on threefold scrolls $(X, h)$ as above and considered modular components of $h$-Ulrich bundles arising from {\em iterated extensions} of the pair $(N, N^U)$, under $0 \leqslant a \leqslant b \leqslant 1$.

In fact, building on \textsc{Theorem A}-(2), \cite{fa-fl-ma-pl} first identified $h$-Ulrich line bundles $L$ and $L^U$ as  those obtained via {\em twisted pullbacks} from the base, namely of the form $h \otimes \varphi^*(\mathcal{L}_{\mathbb{F}_a})$ for some $\mathcal L_{\FF_a}$ which, up to a twist, is Ulrich on $\FF_a$ w.r.t. a natural very ample polarization on $\FF_a$ (cf. \cite[Thm.\,1.12,\;Rem.\,3.3]{fa-fl-ma-pl}). Instead, iterated extensions involving $(M, M^U)$ or even {\em mixed pairs} have been excluded in \cite{fa-fl-ma-pl}, as they have been proved to  yield only strictly semistable bundles, i.e. failing to produce positive-dimensional modular components (cf. Theorem 4.1-{\bf Cases k}, with $k=3,4,8$ in \cite{fa-fl-ma-pl}).

This explains why \cite[Thm.\,5.6]{fa-fl-ma-pl} focused on ({\em deformations} of) {\em iterated extensions} involving $(N, N^U)$, as these line bundles were proved to be not {\em twisted pullbacks} from either $\mathbb{F}_a$ or $\mathbb{F}_b$, dealing therefore with {\em $h$-Ulrichness} on $X$ not coming from base-surfaces. Nevertheless, in \cite[Thm.\,5.6]{fa-fl-ma-pl}, the authors were forced to use $0 \leqslant a \leqslant b \leqslant 1$ to avoid {\em obstructions} to the existence of generically smooth modular components for Ulrich bundles arising from them (cf. \cite[Thm.\,4.1-($1_b$),\; Thm.\,4.2-(jj),\,(jjj)]{fa-fl-ma-pl}).

\bigskip

The aim of this paper is to provide description of what occurs in the remaining  cases, distinguishing between two primary regimes based on the nature of the $h$-Ulrich line bundles and the modular behavior of their higher-rank iterated (deformed) extensions:
 
\medskip

\noindent
$\bullet$ {\bf The Inherited Case} $(L, L^U)$: for $b \geqslant 0 =a, \; c \geqslant b+1$, we consider here deformations of iterated extensions of $(L, L^U)$, both {\em twisted pullbacks} from the base $\mathbb{F}_0$.

\medskip

\noindent
$\bullet$ {\bf The Obstructed Case} $(N, N^U)$: outside the range $0 \leqslant a \leqslant b \leqslant 1$, the pair $(N, N^U)$ presents {\em cohomological obstructions} to the existence of smooth modular families (cf. \cite[Thm.\,4.1-($1_b$),\; Thm.\,4.2-(jj) and (jjj)]{fa-fl-ma-pl}). However, {\em $h$-Ulrich wildness} can still be proven without existence of smooth modular components.

\bigskip 

\noindent
\textsc{Main Theorem} {\em (1) Let $b \geqslant 0=a$ and $c \geqslant b+1$ be integers and let $(X, h)$ be a threefold  scroll over $\FF_0$ as in \eqref{eq:Xe}. Then, for any integer $r \geqslant 1$, the moduli space of rank-$r$ vector bundles $\cU_r$ on $X$, which are $h$-Ulrich and with Chern classes

   \begin{eqnarray} \label{eq:chernLLU}
   c_1(\cU_r): =
    \begin{cases} 
      \mathcal O_0 \left(r, \, r+1, \frac{(r-1)(4c-b-2)+ 2(c-1)}{2}\right), & \mbox{if $r$ is odd}, \\ 
      & \\
      \mathcal O_0 \left(r, \, r, \frac{r}{2}(4c-b-2)\right), & \mbox{if $r$ is even},  
    \end{cases}
  \end{eqnarray}
     \begin{eqnarray*}
{\small c_2(\cU_r) =
    \begin{cases} 
    \scriptstyle \left[r^2-1, \; r^2 (2c-b-1) + r - (3r-1) c + b \frac{(3r-1)}{2}, \; (r^2-1)(2c-1) - b \frac{(r^2-1)}{2}  \right], & \mbox{if $r\geqslant 3$ odd}, \\
    & \\
\scriptstyle \left[ r(r-1), \; r(r-1) (2c-b-1), \; \frac{r^2}{2}(4c-b-2) +(1-c)r\right], & \mbox {if $r$ even},  
    \end{cases} }
     \end{eqnarray*}    
     \begin{eqnarray*}
     {\small 
 c_3(\cU_r) = \begin{cases} (2c-b-1)r^3+(2b-4c+2)r^2- (2c-b-1)r-2b+4c-2, & \mbox{if $r\geqslant 3$ odd}, \\
& \\  (2c-b-1)r^3+5b \frac{r^2}{2} - (5c-3)r^2+ (2c-b-2)r, & \mbox{if $r \geqslant 4$  even},  
    \end{cases}}
      \end{eqnarray*} is not empty and it contains a generically smooth component, denoted by $\mathcal M(r)$, which is moreover {\em unirational} for $r \geqslant 3$ and {\em rational} for $r=2$, of dimension 
		\begin{eqnarray}\label{eq:dimLLU}
		\dim (\mathcal M(r) ) = \begin{cases} (r^2 -1)(2c-b-1), & \mbox{if $r$ is odd}, \\
			 r^2(2c-b-1) +1 , & \mbox{if $r$ is even},
    \end{cases}
    \end{eqnarray} whose general point $[\cU_r] \in \mathcal M(r)$  
corresponds to a  slope-stable vector bundle, of $h$-slope 
$\mu(\cU_r) = 4(2c-b-a)-2$. In particular, $X$ is {\em (geometrically) $h$-Ulrich wild} with $Ur(X) = \mathbb N^*$, more precisely with no {\em slope-stable-$h$-Ulrich-rank gaps}.

\medskip

\noindent
(2) Let either $0 \leqslant a \leqslant 1$, $b \geqslant 2$ or $b \geqslant a \geqslant 1, \;(a,b) \neq (1,1)$ be integers and let $(X, h)$ be a threefold  scroll over $\FF_a$ as in \eqref{eq:Xe}. Then, for any $c \geqslant a+b+1$, $X$ is {\em (geometrically) $h$-Ulrich wild} with $Ur(X) = \mathbb N^*$, i.e. with no {\em indecomposable-$h$-Ulrich-rank gaps}. 
}

\bigskip

In the sequel, we will work over the complex field $\mathbb{C}$ and for standard terminology we always refer to \cite{H}. As a matter of notation, if $\mathcal P$ is either a {\em parameter space} of a flat family of geometric objects $\mathcal F$ or a {\em moduli space} on a varity $V$, $[\mathcal F]$ will denote the parameter (resp., moduli) point corresponding to (equivalence class of) $\mathcal F$.

\section{Proof of \textsc{Main Theorem}} This section is entirely devoted to the proof of \textsc{Main Theorem}. 

\begin{proof}[Proof of \textsc{Main Theorem}] (1) We first focus on the {\bf Inherited Case}, that deals with $L= \mathcal O_0(1,0,3c-b-1)$ and $L^U= \mathcal O_0(1, 2,c-1)$ as in \textsc{Theorem A}--(2) which, from $L(-h) = \varphi^*(\mathcal O_{\FF_0} (-1, 2c-b-1))$ and $L^U(-h) = \varphi^*(\mathcal O_{\FF_0} (1, -1))$, are $h$-Ulrich  {\em twisted pullbacks} from $\FF_0$ in the sense of \cite[Thm.2.12]{fa-fl-ma-pl}. 

Indeed $c_1(\mathcal E_0^b \otimes \mathcal O_{\FF_0}(1,c)) = \mathcal O_{\FF_0}(2,2c-b)$ is very-ample (by $c \geqslant b+1$), $\mathcal O_{\FF_0} (-1, 2c-b-1) \otimes \mathcal O_{\FF_0}(2,2c-b) = \mathcal O_{\FF_0}(1,4c-2b-1)$ and $ \mathcal O_{\FF_0} (1, -1) \otimes \mathcal O_{\FF_0}(2,2c-b) = \mathcal O_{\FF_0}(3,2c-b-1)$ are the only $\mathcal O_{\FF_0}(2,2c-b)$-Ulrich line bundles on $\FF_0$ (cf. \cite[Rem.\;3.3-(2)]{fa-fl-ma-pl}).

To establish the existence of rank-$r \geqslant 2$, $h$-Ulrich bundles on $X$, we use {\em recursive extensions} involving $(L, L^U)$. To do so, we ease notation by $L_1 := L^U$ and $L_2 := L$.

\medskip 

\noindent
{\bf Rank-two construction}: this is \cite[Thm.\,4.1-{\bf Case (2)}]{fa-fl-ma-pl} which, from 
$h^1(L_1-L_2)= 3(2c-b-1)$, $h^1(L_2 - L_1) = 2c-b+1$, can be used as a basic step for an {\em iterative process} and which deals with (deformations of) unsplitting extensions:
$$(g_2): 0 \to L_1 \to \cG_2 \to L_2 \to 0$$where $\cG_2$ turns out to be a $h$-Ulrich {\em twisted pullback} from $\mathbb{F}_0$; indeed from $(g_2)$, one has $\cG_2(-h) = \varphi^*(\cF_{2,\; \FF_0})$, with $\cF_{2,\, \FF_0}$ a rank-two vector bundle on $\FF_0$ fitting into 
$$(f_2):\;\; 0 \to \mathcal O_{\FF_0} (1, -1) \to \cF_{2,\; \FF_0} \to \mathcal O_{\FF_0} (-1, 2c-b-1)\to 0,$$so $\cF_{2,\; \FF_0}\otimes  \mathcal O_{\FF_0}(2,2c-b)$ is $\mathcal O_{\FF_0}(2,2c-b)$-Ulrich on $\FF_0$  (cf. \cite[(4.13)]{fa-fl-ma-pl}).

\medskip

\noindent
{\bf Iterations for $r \geqslant 3$}: for any $r \geqslant 3$, we recursively use:
$$(g_r): 0 \to \cG_{r-1} \to \cG_r \to L_{\epsilon_r} \to 0,\;\; \mbox{where} \;\; L_{\epsilon_r}=
    \begin{cases}
    L_1, & \mbox{if $r$ is odd}, \\
      L_2, & \mbox{if $r$ is even};
    \end{cases}$$This alternate use of line bundles on the right-side of $(g_r)$ and similar strategy as in \cite[Lemma 4.1-(iii)]{fa-fl2} allow, from coboundary map reasons, to find that: $$\dim(\Ext^1(L_{\epsilon_{r}},\cG_{r-1})) \geqslant   {\rm min} \{h^1(L_1-L_2) , \; h^1(L_2-L_1)\} = 2c-b+1 \geqslant b+3 \geqslant 3,$$the inequalities following from $c \geqslant b+1,\; b\geqslant 0$. Thus, at any step $r \geqslant 3$ unsplitting sequences $(g_r)$ actually exist, whose middle term $\cG_{r}$ is a rank-$r$, $h$-Ulrich bundle, of same $h$-slope as $\cG_2$, i.e. as in the statement, and whose Chern classes can be recursively computed by using $(g_r)$ and the expressions of $L_1= L^U$ and $L_2 = L$, i.e. 
{\small $$c_1(\cG_r) = c_1(\cG_{r-1}) + c_1 (L_{\epsilon_r}),\; c_2(\cG_r) = c_2(\cG_{r-1}) + 
c_1(\cG_{r-1}) \cdot c_1 (L_{\epsilon_r}), \; 
c_3(\cG_r) = c_3(\cG_{r-1}) + 
c_2(\cG_{r-1}) \cdot c_1 (L_{\epsilon_r}).$$}which give formulas as in \eqref{eq:chernLLU}. Morevover, since $\cG_r$ fits in $(g_r)$, where $\cG_{r-1}$ (by induction) and  $L_{\epsilon_{r}}$ are both $h$-Ulrich {\em twisted pullbacks}, the same holds true for $\cG_r$.

For $r \geqslant 3$ the bundle $\cG_{r-1}$ is strictly semistable thus, unlike the $r=2$ case,  to deduce simplicity for $\cG_r$ \cite[Lemma\,4.2]{c-h-g-s} does not apply. To circumvent this, we may replace $\cG_{r-1}$ with a slope-stable $\cU_{r-1}$ corresponding to a general point $[\cU_{r-1}] \in \mathcal{M}(r-1)$ of the modular component constructed by induction at the $(r-1)$-th step. This gives rise to an exact sequence:
$$(\widehat{g_r}): 0 \to \cU_{r-1} \to \widehat{\cG}_r \to L_{\epsilon_r} \to 0$$where 
$\dim(\Ext^1(L_{\epsilon_{r}},\cU_{r-1})) \geqslant \dim(\Ext^1(L_{\epsilon_{r}},\cG_{r-1})) -1 \geqslant b+2$ (cf. \cite[Lemma\,4.4]{fa-fl2} for similar reasoning). The latter inequality implies that $ [(\widehat{g_r})] \in \Ext^1(L_{\epsilon_{r}},\cU_{r-1}) $ general is certainly an unsplitting sequence. Morever, as both $\cU_{r-1}$ and $L_{\epsilon_r}$ are now slope-stable, non-isomorphic, with same $h$-slope, one can apply \cite[Lemma\,4.2]{c-h-g-s} so the middle term $\widehat{\cG}_r$ is $h$-Ulrich and simple. Furthermore, using induction, $(\widehat{g_r})$ and its dual sequence, one easily shows that $h^j(\widehat{\cG}_r \otimes \widehat{\cG}_r^\vee) = 0,\; j \geqslant 2$. Thus, from \cite[Prop.\,2.10]{c-h-g-s}, deformations of $\widehat{\cG}_r$ define a {\em flat, irreducible, smooth modular family} $\mathfrak{M}_r$ s.t. $\dim(\mathfrak{M}_r) =1-\chi(\widehat{\cG}_r \otimes \widehat{\cG}^{\vee}_r) = 1-\chi(\cG_r \otimes \cG_r^\vee)$ where, by $(g_r)$:
$$\chi(\cG_{r} \otimes \cG_{r}^{\vee}) = 
\begin{cases} 
\frac{(r^2-1)}{4} (2 - h^1(L_1-L_2)- h^1(L_2-L_1)) & \text{if $r$ is odd} \\ 
\frac{r^2}{4} (2 - h^1(L_1-L_2) - h^1(L_2-L_1)) & \text{if $r$ is even} 
\end{cases}$$which gives $\dim(\mathfrak{M}_r)$ as in \eqref{eq:dimLLU}. Similarly as for the $r=2$ case, slope stability of $[\cU_r] \in \mathfrak{M}_r$ general is verified via dimension counting: one has to show that families of $h$-Ulrich bundles that are not stable (i.e. arising as extensions via lower-rank $h$-Ulrich bundles--cf.\,e.g.\,\cite{b,c-h-g-s}) fill-up subsets of dimension strictly less than $\dim(\mathfrak{M}_r)$. These computations are much more involved than in the case $r=2$, so we refer the reader to  proofs of \cite[Lemmas\,4.6,\,4.7\,and\,Prop.\,4.8]{fa-fl2} for similar reasoning. Since $[\cU_r] \in \mathfrak{M}_r$ general turns out to be slope-stable, $\mathfrak{M}_r$ generically finitely dominates its GIT-quotient image $\mathcal M(r)$ which is the desired irreducible modular component as in the statement. 

We are left with proving {\em unirationality} of $\mathcal M(r)$ for any $r \geqslant 3$, as $\mathcal M(2)$ is {\em rational} by \cite[Thm.\,4.1-Case(2)]{fa-fl-ma-pl}. To do so, consider $(g_r)$ tensored by $h^{\vee}$ and apply $\varphi_*$ to get
{\small 
$$(f_r):\;\; 0 \to\cF_{r-1,\; \FF_0} \to \cF_{r,\; \FF_0} \to \mathcal L_{\epsilon_r,\; \FF_0} \to 0, \; {\rm where} \;\mathcal L_{\epsilon_r,\; \FF_0}:=
    \begin{cases}
    \mathcal O_{\FF_0}(1, -1), & \mbox{if $r$ is odd}, \\
      \mathcal O_{\FF_0}(-1, 2c-b-1), & \mbox{if $r$ is even}
    \end{cases}.$$}For any $r \geqslant 1$, we consider the associated bundles on $\FF_0$: $$\mathcal A_{\epsilon_r,\; \FF_0} :=  \mathcal L_{\epsilon_r,\; \FF_0} \otimes  \mathcal O_{\FF_0} (2, 2c-b) \; {\rm and} \; \cH_{r, \mathbb F_0} := \cF_{r,\; \FF_0} \otimes 
    \mathcal O_{\FF_0} (2, 2c-b), \;{\rm for} \;  r \geqslant 2.$$When $r \geqslant 2$, they are of Chern classes $ \overline{c}_1:= c_1 (\cH_{r, \mathbb F_0})$ and $\overline{c}_2:= c_2 (\cH_{r, \mathbb F_0})$ s.t. $$\overline{c}_1= c_1(\cF_{r,\; \FF_0}) \otimes \mathcal O_{\FF_0}(2r, r(2c-b)), \; \overline{c}_2= c_2(\cF_{r,\; \FF_0}) + (r-1) c_1(\cF_{r,\; \FF_0}) \cdot \mathcal O_{\FF_0} (2, 2c-b) + 2r(r-1)(2c-b),$$where $c_i(\cF_{r,\; \FF_0})$, $1 \leqslant i \leqslant 2$, can be recursively computed via $(f_r)$. They are moreover $ \mathcal O_{\FF_0} (2, 2c-b)$-Ulrich on $\FF_0$, as one can easily show by recursive vanishings obtained via the use of $(f_r)$ (cf. also the {\em admissible Ulrich pairs} as in \cite{ant}). 

 As above for $\widehat{\cG}_r$ on $X$, flat deformations of bundles $\cH_r$ give rise to an irreducible, smooth modular family $\mathfrak{M}_{r,\FF_0}$ whose projection to the moduli space $\mathcal M_{\FF_0}(r; \overline{c}_1\, \overline{c}_2)$ of $ \mathcal O_{\FF_0} (2, 2c-b)$-Ulrich bundles of rank $r$ and Chern classes $\overline{c}_1\, \overline{c}_2$ as above is generically finite, so it gives rise to an irreducible component $\mathcal M_{\FF_0}(r)$ which turns out to be generically smooth, whose general point $[\cH_r] $ is slope-stable, 
and which is of dimension $\dim(\mathcal M_{\FF_0}(r)) = 1 - \chi (\cH_{r, \mathbb F_0} \otimes \cH^{\vee}_{r, \mathbb F_0})  = 1 -  \chi (\cF_{r, \mathbb F_0} \otimes \cF^{\vee}_{r, \mathbb F_0})$, the latter recursively computed via $(f_r)$ so to have: 
\begin{eqnarray}\label{eq:dim2}
\dim(\mathcal M_{\FF_0}(r)) = 
		\begin{cases} (r^2 -1)(2c-b-1), & \mbox{if $r\geqslant 3$ is odd}, \\
			 r^2(2c-b-1)+1, & \mbox{if $r$ is even}.
    \end{cases}
    \end{eqnarray}By \cite[Thm.\,1.1]{ant}, $\cH_r $  necessarily  fits into the exact sequence

\begin{eqnarray}\label{eq:coker}
0 \to \mathcal O_{\FF_0}(1, 2c-b -1)^{\oplus\,\gamma} \stackrel{\phi}{\longrightarrow}   \mathcal O_{\FF_0}(1, 2c-b)^{\oplus \delta} \oplus \mathcal O_{\FF_0}(2, 2c-b-1)^{\oplus \tau} \to\cH_r  \to 0                            
 \end{eqnarray}where $$\begin{cases}
     \gamma=\frac{(2c-b)(r-1)}{2}+1,\; \delta= \frac{(r-1)(b-2c)}{2}, \tau=r+1 & \mbox{if $r$ is odd}, \\
      \gamma= \frac{(b-2c)r}{2},\;\delta= \frac{(b-2c)r}{2},\;\tau=r & \mbox{if $r$ is even}.
    \end{cases}$$Namely $\cH_r $ arises as the cokernel of $[\phi] \in \rm{Hom}_{\FF_e} ( \sA, \sB)$, where $$\sA :=  \mathcal O_{\FF_0}(1, 2c-b -1)^{\oplus \gamma} \;\; {\rm and} \;\; \sB :=  \mathcal O_{\FF_0}(1, 2c-b)^{\oplus \delta} \oplus \mathcal O_{\FF_0}(2, 2c-b-1)^{\oplus \tau},$$with $\gamma, \delta, \tau$  as above. 
    
    On  the other hand, by  \cite[Thm.\;1.3]{ant}, for $[\phi_{gen}] \in \rm{Hom}_{\FF_e} ( \sA, \sB)$ general, ${\rm coker}(\phi_{gen})$ is of rank $r$, $\mathcal O_{\FF_0}(2, 2c-b)$-Ulrich on $\FF_0$, with Chern classes 
    $c_1({\rm coker}(\phi_{gen})) = \overline{c}_1$ and $c_2({\rm coker}(\phi_{gen})) = \overline{c}_2$. Since $ \sA, \sB$ are uniquely  determined  by the integers $r,c,b$, as it occurs for $ \overline{c}_i$, $1 \leqslant i \leqslant 2$, and since $\rm{Hom}_{\FF_0} ( \sA, \sB)$ is irreducible, then $\mathcal M_{\FF_0}(r)$ is the only irreducible component i.e. $\mathcal M_{\FF_0}(r; \overline{c}_1\, \overline{c}_2) = \mathcal M_{\FF_0}(r)$ is irreducible and unirational, being dominated by $\rm{Hom}_{\FF_0} ( \sA, \sB)$.  Since $\cF_{r,\; \FF_0}:= \cH_{r, \FF_0} \otimes \mathcal O_{\FF_0} (- 2, -(2c-b))$ and 
   $\widehat{\cG}_r = h \otimes \varphi^*(\cF_{r,\; \FF_0})$, we therefore have a natural morphism 
$$\mathcal M_{\FF_0}(r)\stackrel{\psi}{\longrightarrow} \mathfrak{M}_r, \;\; 
[\cH_{r,\; \FF_0}] \mapsto [\psi(\cH_{r,\; \FF_0})]:= [ h  \otimes  \varphi^*(\cF_{r,\; \FF_0})]= [\widehat{\cG_r}].$$Taking any section $\sigma:\FF_0\rightarrow X$ we have 
$id_{\FF_0}^*=(\varphi\circ\sigma)^*=\sigma^*\circ\varphi^*$, thus $\psi$ is injective. Since $\mathfrak{M}_r$ and $\mathcal M_{\FF_0}(r)$ have the same dimension, by \eqref{eq:dimLLU} and \eqref{eq:dim2}, then (being injective) $\psi$ is also dominant therefore birational. Unirationality of $\mathcal M_{\FF_0}(r)$ implies therefore that $\mathfrak{M}_r$ is unirational; since moreover $[\cU_r] \in \mathfrak{M}_r$ general has been proved to be slope-stable, $\mathfrak{M}_r$ generically finitely dominates the modular component $\mathcal M(r)$ via the GIT-quotient map so $\mathcal M(r)$ is unirational too.

Finally, from \textsc{Theorem A}-(2) and from above, $X$ is s.t. $uc_h(X)=1$ and it supports modular components $\mathcal{M}(r)$ of slope-stable $h$-Ulrich bundles for any rank $r \geqslant 2$, so in particular $X$ is {\em (geometrically) $h$-Ulrich wild} with $Ur(X) = \mathbb N^*$, i.e. more precisely with no slope-stable $h$-Ulrich-rank gaps.

\bigskip

\bigskip

\noindent
(2) This is the {\bf Obstructed Case}, dealing with $N = \mathcal O_a(2, 0,2c-a-1)$ and $N^U = \mathcal O_a(0, 2,2c-b-1)$, for any  $b \geqslant a \geqslant 0$ s.t. $(a,b) \neq (0,0),\,(0,1)\,(1,1)$ (these 3 latter cases being {\em unobstructed} and already discussed in \cite[Thm.\,5.6]{fa-fl-ma-pl}). As for Case (1), we ease notation by taking $N_1:= N^U$, $N_2:=N$ and $$N_{\epsilon_r}=
    \begin{cases}
    N_1, & \mbox{if $r$ is odd}, \\
      N_2, & \mbox{if $r$ is even}.
    \end{cases}$$From the proofs of \cite[Thms.\,4.1-($1_b$)\,and\,4.2-(jj),\,(jjj)]{fa-fl-ma-pl}  one has: 
\begin{eqnarray}\label{eq:com}
h^1(N_2-N_1)=b+2,\;h^1(N_1-N_2)=a+2,\;h^2(N_2-N_1)=b-1,\\
\nonumber h^2(N_1-N_2)= {\rm min} \{0, a-1\}, \;h^j(N_2-N_1) = h^j(N_1-N_2)=0, \; j=0,3.
 \end{eqnarray} Thus, from e.g. $\dim({\rm Ext}^1 (N_2,N_1))=h^1(N_1-N_2)= a+2 \geqslant 2$, any non-zero $[(g_2)] \in {\rm Ext}^1 (N_2,N_1)$ gives rise to an unsplitting sequence 
$$(g_2):\; \; 0 \to N_1\stackrel{\iota_2}{\longrightarrow}\cG_2 \stackrel{p_2}{\longrightarrow}  N_2 \to 0,$$whose middle-term $\cG_2$ is strictly semistable, $h$-Ulrich, of  rank two and of the same $h$-slope of $N_1$ and $N_2$, i.e. 
$\mu = 4(2c-b-a-2)-2$. Similarly as in Case (1), for $r \geqslant 3$, we then proceed by taking {\em alternating extensions} $$(g_r):\;\; 0 \to \cG_{r-1} \stackrel{\iota_r}{\longrightarrow} \cG_r \stackrel{p_r}{\longrightarrow} N_{\epsilon_r} \to 0,$$as, from coboundary map reason, the alternate use of line bundles on the right-side prevents one to get, after finitely many steps, a zero-dimensional extension spaces, i.e.:

\begin{claim}\label{cl:ext} For any $r \geqslant 3$, one has 
$\dim({\rm Ext}^1 (\cG_{r-1},N_{\epsilon_r})) \geqslant \begin{cases}
    r-1, & \mbox{if $r \geqslant 3$ is odd} \\
      r+1, & \mbox{if $r \geqslant 4$ is even}
    \end{cases}$
\end{claim}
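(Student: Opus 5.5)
We plan to avoid tracking coboundary maps step by step and instead bound $\dim({\rm Ext}^1(\cG_{r-1},N_{\epsilon_r})) = h^1(\cG_{r-1}^{\vee}\otimes N_{\epsilon_r})$ from below by an Euler characteristic. Since $\chi$ is additive, it can be computed directly from the filtration of $\cG_{r-1}$ given by the iterated extensions $(g_2),\ldots,(g_{r-1})$. Concretely, we use
$$h^1(\cG_{r-1}^{\vee}\otimes N_{\epsilon_r}) \;\geqslant\; -\chi(\cG_{r-1}^{\vee}\otimes N_{\epsilon_r}) + h^0(\cG_{r-1}^{\vee}\otimes N_{\epsilon_r}) + h^2(\cG_{r-1}^{\vee}\otimes N_{\epsilon_r}) - h^3(\cG_{r-1}^{\vee}\otimes N_{\epsilon_r}),$$
and we aim to show that $h^3$ vanishes while the remaining terms give the claimed bounds.

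\emph{Step 1: vanishing of $h^3$.} By Serre duality, $h^3(\cG_{r-1}^{\vee}\otimes N_{\epsilon_r}) = \dim {\rm Hom}(N_{\epsilon_r}, \cG_{r-1}\otimes\omega_X)$. Both $N_{\epsilon_r}$ and $\cG_{r-1}$ are $h$-Ulrich of the same $h$-slope $\mu$, while $\omega_X$ has negative $h$-degree. Hence $\cG_{r-1}\otimes\omega_X$ is semistable of strictly smaller slope than $N_{\epsilon_r}$, and this ${\rm Hom}$ vanishes. We plan to run the same argument factor by factor along the filtration as a check.

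\emph{Step 2: computing $\chi$.} The bundle $\cG_{r-1}$ has a filtration with graded pieces $N_1,N_2,N_1,N_2,\ldots$, alternating and starting with $N_1$. Therefore $\chi(\cG_{r-1}^{\vee}\otimes N_{\epsilon_r})$ is the sum of $\chi(N_{\epsilon_r}-N_j)$ over these pieces. From \eqref{eq:com} we have $\chi(\Oc_X)=1$, $\chi(N_2-N_1) = -(b+2)+(b-1) = -3$, and $\chi(N_1-N_2) = -(a+2)+h^2(N_1-N_2)$; the $h^2$ term must be read as $\max\{0,a-1\}$, so this equals $-3$ for $a\geqslant 1$ and $-2$ for $a=0$.
\begin{itemize}
\item For $r\geqslant 4$ even, $N_{\epsilon_r}=N_2$ and $\cG_{r-1}$ has $r/2$ pieces $N_1$ and $r/2-1$ pieces $N_2$. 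This gives $\chi = (r/2-1) - 3r/2 = -(r+1)$, which is the claimed bound $r+1$ once Step 1 holds.
\item For $r$ odd, $N_{\epsilon_r}=N_1$ and $\cG_{r-1}$ has $(r-1)/2$ copies of each line bundle. This gives $\chi = \tfrac{r-1}{2}\bigl(1+\chi(N_1-N_2)\bigr)$, which equals $-(r-1)$ when $a\geqslant 1$, as claimed.
\end{itemize}

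\emph{Step 3: the case $a=0$, $r$ odd.} This is the step we expect to be the main obstacle. Here the Euler characteristic alone only gives $(r-1)/2$, so the missing $(r-1)/2$ must come from $h^0+h^2$. We would first try to show $h^2(\cG_{r-1}^{\vee}\otimes N_1)\geqslant (r-1)/2$. The plan is an induction along the dual sequences of $(g_k)$ tensored by $N_1$: each copy of $N_2$ in the filtration contributes a term $H^2(N_1-N_2)$, but for $a=0$ these vanish, so this route gives nothing directly. We would then switch to $h^0$, i.e. count independent maps $\cG_{r-1}\to N_1$. Such maps factor through the successive quotients that are copies of $N_1$, and we would try to show that all $(r-1)/2$ of them lift along the tower. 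The lifting obstructions live in $H^1(N_1-N_2)$, and the issue is whether the coboundary classes vanish.

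If this fails, the fallback is the coboundary argument alluded to in the text. Apply ${\rm Hom}(-,N_{\epsilon_r})$ to $(g_{r-1})$ to get
$$\dim{\rm Ext}^1(\cG_{r-1},N_{\epsilon_r}) \geqslant \dim{\rm Ext}^1(N_{\epsilon_{r-1}},N_{\epsilon_r}) - \dim{\rm Hom}(\cG_{r-2},N_{\epsilon_r}) + \dim\ker\bigl({\rm Ext}^1(\cG_{r-2},N_{\epsilon_r})\to{\rm Ext}^2(N_{\epsilon_{r-1}},N_{\epsilon_r})\bigr).$$
For $a=0$ and $r$ odd, the ${\rm Ext}^2$ term is $h^2(N_1-N_2)=0$, so the last map vanishes. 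Induction on $r$, two steps at a time using $\epsilon_{r-2}=\epsilon_r$, should then propagate a gain of $a+2-1\geqslant 1$ at each pair of steps. This would yield the bound $r-1$ once $\dim{\rm Hom}(\cG_{r-2},N_{\epsilon_r})$ is controlled via the filtration.
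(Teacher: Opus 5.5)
Your Steps 1 and 2 are the paper's argument: kill $h^3$, bound $h^1$ below by $-\chi$, and compute $\chi$ additively over the alternating filtration. The paper phrases this through the $S$-equivalence $\cG_{r-1}\sim N_1^{\oplus r_{N_1}}\oplus N_2^{\oplus r_{N_2}}$, and it bounds $h^1(\cG_{r-1}\otimes N_{\epsilon_r}^{\vee})$, the group containing the class of $(g_r)$. You bound the group as literally written. Since $\chi(N_1-N_2)=\chi(N_2-N_1)$ and both $h^3$'s vanish by your slope argument, the two bounds agree.

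\textbf{The gap.} The case $a=0$, $r$ odd is left open, and only because of a wrong input. The value $h^1(N_1-N_2)=a+2$ in \eqref{eq:com} holds only for $a\geqslant 1$. The paper itself uses $\chi(N_1-N_2)=\chi(N_2-N_1)=-3$ with no restriction on $a$, and this is correct. Indeed $N_1-N_2=\Oc_a(-2,2,a-b)$ and $R^1\varphi_*\Oc_X(-2\xi)\cong\det(\E_a^b)^{\vee}=\Oc_{\FF_a}(0,b)$. Hence $H^i(X,N_1-N_2)\cong H^{i-1}(\FF_a,\Oc_{\FF_a}(2,a))$. Pushing forward along the ruling $\FF_a\to\Pp^1$ gives $\Oc_{\Pp^1}(a)\oplus\Oc_{\Pp^1}\oplus\Oc_{\Pp^1}(-a)$. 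So $h^1(N_1-N_2)=a+2$, $h^2=a-1$ for $a\geqslant 1$, but $h^1=3$, $h^2=0$ for $a=0$. Thus $\chi(N_1-N_2)=-3$ for every $a\geqslant 0$. Your formula $\tfrac{r-1}{2}\bigl(1+\chi(N_1-N_2)\bigr)$ then equals $-(r-1)$ uniformly, and Step 3 is unnecessary.

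\textbf{Why Step 3 fails as written.} With your value $\chi(N_1-N_2)=-2$ it cannot be completed, and your routes would in fact contradict the claim.
\begin{itemize}
\item For $a=0$ the graded pieces of $\cG_{r-1}^{\vee}\otimes N_1$ are $\Oc_X$ and $N_1-N_2$, both with vanishing $h^2$, so $h^2(\cG_{r-1}^{\vee}\otimes N_1)=0$.
\item ${\rm Hom}(\cG_{r-1},N_1)=0$ for $r$ odd. Apply ${\rm Hom}(-,N_1)$ to $(g_{r-1})$: ${\rm Hom}(N_2,N_1)=0$, and the coboundary is injective on ${\rm Hom}(\cG_{r-2},N_1)\cong\mathbb C$, as in the proof of Claim~\ref{cl:noiso}. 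So there are no maps to lift.
\item Hence $h^1(\cG_{r-1}^{\vee}\otimes N_1)=-\chi$ exactly, and with your $\chi$ this would be $(r-1)/2<r-1$.
\item Your fallback has the same defect. With $\dim{\rm Ext}^1(N_2,N_1)=2$ and $\dim{\rm Hom}(\cG_{r-2},N_1)=1$, it gains $1$ per pair of steps, i.e.\ $(r-1)/2$ in total, not $r-1$. It reaches $r-1$ only with the correct value $\dim{\rm Ext}^1(N_2,N_1)=3$.
\end{itemize}
The missing ingredient is simply the correct computation of $h^1(N_1-N_2)$, equivalently $\chi(N_1-N_2)=-3$, at $a=0$.
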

\begin{proof}[Proof of Claim \ref{cl:ext}] We use $(g_{r-1}): 0 \to \cG_{r-2} \to \cG_{r-1} \to  N_{\epsilon_{r-1}} \to 0$, inductively constructed, tensored by $N_{\epsilon_r}^{\vee}$.  Since $\epsilon_{r-1}$ and $\epsilon_{r}$ have different parity, by induction we get 
 $h^3(\cG_{r-1} \otimes  N_{\epsilon_r}^{\vee}) = 0$ so, in particular, 
\begin{eqnarray}\label{eq:boundutile}
h^1(\cG_{r-1} \otimes  N_{\epsilon_r}^{\vee})=  h^0(\cG_{r-1} \otimes  N_{\epsilon_r}^{\vee})+h^2(\cG_{r-1} \otimes  N_{\epsilon_r}^{\vee})-\chi(\cG_{r-1} \otimes  N_{\epsilon_r}^{\vee})\geqslant -\chi(\cG_{r-1} \otimes  N_{\epsilon_r}^{\vee})
\end{eqnarray} Since, for any $r \geqslant 3$, $\cG_{r-1}$ is strictly semistable, it is $S$-equivalent to $ N_1^{\oplus r_{N_1}}  \oplus N_2^{\oplus r_{N_2}}$, where 
$r_{N_1}$  (resp., $r_{N_2}$) denotes the number of copies of $N_1$ (resp., $N_2$) used in the successive extension, where $r_{N_1}+r_{N_2}= {\rm rk}(\cG_{r-1})= r-1$. Thus
\begin{eqnarray}\label{eq:aiuto1} 
\chi(\cG_{r-1} \otimes  N_{\epsilon_r}^{\vee})=r_{N_1} \, \chi(N_1\otimes  N_{\epsilon_r}^{\vee})+r_{N_2}\, \chi(N_2\otimes  N_{\epsilon_r}^{\vee}).
\end{eqnarray} From \eqref{eq:com}, $\chi(N_1- N_2)= \chi(N_2-N_1) =-3$ whereas $\chi(N_{\epsilon_r}-N_{\epsilon_r})=\chi(\mathcal O_{X})=1$. 

Therefore if $\epsilon_r=1$, i.e. $r$ is odd so $r-1$ is even, from \eqref{eq:aiuto1} and $r_{N_1}+r_{N_2}=r-1$, we get 
$\chi(\cG_{r-1} \otimes  N_{1}^{\vee})=r_{N_1}\chi(\mathcal O_{X})+r_{N_2} \; \chi(N_2\otimes  N_{1}^{\vee})= r_{N_1}-3\,r_{N_2}=(r-1)-4\,r_{N_2}$. If otherwise $\epsilon_r=2$, i.e. $r$ is even so $(r-1)$ is odd, similarly we get $\chi(\cG_{r-1} \otimes  N_{2}^{\vee})=(r-1)-4\,r_{N_1}$. Since we are taking {\em alternating extensions}, when $r$ is odd,  $N_1$ is used as many times as $N_2$ plus one, whereas when $r$ is even $N_1$ is used as many times as $N_2$. Therefore, since $r_{N_1}+r_{N_2}=r-1$, one has  
$r_{N_1}= \Big\lceil{\frac{r-1}{2}}\Big\rceil \; \mbox{and}  \; r_{N_2}= \Big\lfloor{\frac{r-1}{2}}\Big\rfloor$. 

Hence, if $r = 2k+1$ is odd, $r_{N_1} =r_{N_2}=k$ and $\chi(\cG_{r-1} \otimes  N_{1}^{\vee})=\chi(\cG_{2k} \otimes N_{1}^{\vee})=(r-1)-4r_{N_2} = 2k-4(k)=-2k=-(r-1)$ whence $h^1(\cG_{r-1} \otimes  N_{1}^{\vee})\geqslant r-1$. If otherwise $r=2k \geqslant 4$ is even, $r_{N_1} = k$, $r_{N_2}=k-1$ and $\chi(\cG_{r-1} \otimes  N_{2}^{\vee})=\chi(\cG_{2k-1} \otimes N_{2}^{\vee})=(r-1)-4r_{N_1} = (2k-1)-4k=-2k-1= -(r+1)$, thus $h^1(\cG_{r-1} \otimes  N_{2}^{\vee})\geqslant r+1$, concluding the proof of the claim. 
\end{proof} 

From the above claim, at any $r \geqslant 3$ step we certainly have unsplitting sequences $(g_r)$, whose middle-term $\cG_r$ is of rank $r$, $h$-Ulrich, strictly semistable, of the same $h$-slope $\mu = 4(2c-b-a-2)-2$ as $\cG_2$, $N_1$ and $N_2$.

\begin{claim}\label{cl:unsp} For any $r \geqslant 2$, $[(g_r)] \in {\rm Ext}^1 (\cG_{r-1},N_{\epsilon_r})$ general gives rise to an indecomposable  
middle-term $ \cG_r$. 
\end{claim}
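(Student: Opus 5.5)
Note first that the statement writes ${\rm Ext}^1(\cG_{r-1},N_{\epsilon_r})$, while the sequences $(g_r)$ are parametrized by ${\rm Ext}^1(N_{\epsilon_r},\cG_{r-1}) \cong H^1(\cG_{r-1}\otimes N_{\epsilon_r}^{\vee})$. The latter is the group whose dimension was bounded in the proof of Claim \ref{cl:ext}, and it is the one I will use. The plan is by induction on $r$, using the structure of the iterated extension and the fact that all bundles involved are $h$-Ulrich of the same slope. The base case $r=2$ is straightforward. Here $N_1 \not\cong N_2$ are line bundles of the same slope, and $(g_2)$ is non-split. If $\cG_2$ split, say $\cG_2 \cong A\oplus B$ with $A,B$ Ulrich line bundles of the same slope, then the composite $N_1 \to \cG_2 \to A$ (or to $B$) would be a nonzero map between slope-stable line bundles of equal slope, hence an isomorphism. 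That would give a splitting of $\iota_2$, which is impossible.

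For the inductive step, assume $\cG_{r-1}$ is indecomposable. Suppose $\cG_r = \mathcal A\oplus\mathcal B$ with both summands nonzero. Both summands are $h$-Ulrich of slope $\mu$, since direct summands of Ulrich bundles are Ulrich. Hence their Jordan--H\"older factors lie among $N_1,N_2$. Consider the composite $N_{\epsilon_r}$-component: the surjection $p_r$ restricts to maps $\mathcal A\to N_{\epsilon_r}$ and $\mathcal B\to N_{\epsilon_r}$, and at least one of them is nonzero, hence surjective by stability of $N_{\epsilon_r}$ and equality of slopes. I would then show that the kernel of $p_r$, namely $\cG_{r-1}$, decomposes compatibly. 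The cleanest route is to argue that one summand, say $\mathcal B$, maps to zero, so $\mathcal B \subset \cG_{r-1}$ is a direct summand of $\cG_r$ contained in $\cG_{r-1}$. Then $\mathcal B$ is also a direct summand of $\cG_{r-1}$, which by indecomposability forces $\mathcal B=\cG_{r-1}$ and $\mathcal A\cong N_{\epsilon_r}$, so the sequence splits. This contradicts the choice of a non-zero class, whose existence is guaranteed by Claim \ref{cl:ext}. In the other case both restrictions are nonzero. Then I would modify the decomposition by an automorphism of $\cG_r$ of the form $(a,b)\mapsto(a, b-\lambda(a))$, with $\lambda\in{\rm Hom}(\mathcal A,\mathcal B)$, to kill one component. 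This needs a lift $N_{\epsilon_r}\to \mathcal B$ compatible with the projections, and it is the delicate point.

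The main obstacle is this second case, i.e.\ controlling ${\rm Hom}$'s between Ulrich bundles with factors $N_1,N_2$ well enough to reduce to the first case. To handle it, I would first try the following genericity argument. The set of classes $e\in{\rm Ext}^1(N_{\epsilon_r},\cG_{r-1})$ with decomposable middle term is a finite union of images of maps of the form
\[
{\rm Ext}^1(N_{\epsilon_r},\cG')\times \mathrm{Aut}\text{-data} \to {\rm Ext}^1(N_{\epsilon_r},\cG_{r-1}),
\]
coming from splittings $\cG_{r-1}\cong \cG'\oplus\cG''$. Since $\cG_{r-1}$ is indecomposable, no such splitting exists, so a decomposition of $\cG_r$ would give a non-split subextension. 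Concretely, I would use that ${\rm Hom}(N_{\epsilon_r},\cG_{r-1})$ and ${\rm Hom}(\cG_{r-1},N_{\epsilon_r})$ have dimension at most one, by the alternating construction and $h^0(N_1-N_2)=h^0(N_2-N_1)=0$ from \eqref{eq:com}. I would then show that the classes for which $\mathrm{End}(\cG_r)$ contains a nontrivial idempotent form a proper closed subset, by semicontinuity of $h^0(\cG_r\otimes\cG_r^\vee)$ together with the explicit computation of the local ring $\mathrm{End}(\cG_{r-1})$. Indecomposability of the general $\cG_r$ then follows, completing the induction.
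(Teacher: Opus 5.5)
\textbf{The gap.} Your base case and your first inductive case are correct and agree with the paper. (The first inductive case is the one where one restriction of $p_r$ to a summand vanishes; for $r=2$ the paper gets simplicity from \cite[Lemma\,4.2]{c-h-g-s} instead of your direct retraction.) The genuine gap is the remaining case, where both $p_r|_{\mathcal A}$ and $p_r|_{\mathcal B}$ are nonzero. You leave it open, and the genericity sketch cannot close it, for three reasons.

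\begin{itemize}
\item A decomposition of this type need not come from any splitting of $\cG_{r-1}$. Here $\cG_{r-1}=\ker(p_r)$ is an extension of $N_{\epsilon_r}$ by $\ker(p_r|_{\mathcal A})\oplus\ker(p_r|_{\mathcal B})$, and such an extension may well be indecomposable.
\item Semicontinuity of $h^0(\cG_r\otimes\cG_r^{\vee})$ only bounds $\dim{\rm End}(\cG_r)$. It says nothing about idempotents, and decomposability is not a closed condition in families of extensions. For the Kronecker quiver, the general non-split extension of the simple $S_1$ by the indecomposable projective $P_1$ is $E_\lambda\oplus E_\mu$ with $\lambda\neq\mu$ (regular modules of dimension vector $(1,1)$), and both summands map onto $S_1$. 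So indecomposability of $\cG_{r-1}$ plus non-splitting cannot suffice.
\item Knowing that ${\rm Hom}(\cG_{r-1},N_{\epsilon_r})$ has dimension at most one is not enough either.
\end{itemize}

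\textbf{How to close it.} The missing input is the \emph{vanishing} ${\rm Hom}(\cG_{r-1},N_{\epsilon_r})=0$. This is the second half of \eqref{eq:aiuto(a)}; for $r=2$ it is just $h^0(N_2-N_1)=0$. Put $p_{\mathcal A}:=p_r|_{\mathcal A}$, $p_{\mathcal B}:=p_r|_{\mathcal B}$, and let $\pi_{\mathcal A},\pi_{\mathcal B}$ be the projections. The composites $p_{\mathcal A}\circ\pi_{\mathcal A}\circ\iota_r$ and $p_{\mathcal B}\circ\pi_{\mathcal B}\circ\iota_r$ lie in this zero space, so $\iota_r(\cG_{r-1})\subseteq\ker p_{\mathcal A}\oplus\ker p_{\mathcal B}$. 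If both restrictions were nonzero, the right-hand side would have rank $r-2<r-1$, which is absurd. Hence only your first case can occur, and your argument then completes the induction.

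\textbf{Comparison with the paper.} The paper handles this case by asserting that local solvability of $-p_r|_{A_k}(a)=p_r|_{B_{r-k}}(b)$ gives $\ker(p_r)\cong\mathcal L'_{k-1}\oplus B'_{r-k}$. That reasoning only yields an extension $0\to\ker(p_r|_{A_k})\to\ker(p_r)\to B_{r-k}\to 0$. This is exactly the global-lift problem you flagged, so the vanishing is what is really needed there too.

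\textbf{A caution on the vanishing.}
\begin{itemize}
\item For $r\leqslant 4$ it follows from non-splitting together with $h^0(N_1-N_2)=h^0(N_2-N_1)=0$ and $h^1(\mathcal O_X)=0$.
\item For $r\geqslant 5$ the coboundary argument behind \eqref{eq:aiuto(a)} needs the induced extension $0\to N_{\epsilon_{r-2}}\to\cG_{r-1}/\cG_{r-3}\to N_{\epsilon_{r-1}}\to 0$ to be non-split. This does not follow from non-splitting of $(g_{r-1})$ alone: nonzero classes of $(g_4)$ coming from ${\rm Ext}^1(N_2,\cG_2)$ give ${\rm Hom}(\cG_4,N_1)\neq 0$.
\end{itemize}
So for $r\geqslant 5$ the generality of the chosen classes has to be used in earnest.
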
 
\begin{proof}[Proof of Claim \ref{cl:unsp}] For $r=2$, $\cG_{r-1} = \cG_1 := N_1$ and $N_{\epsilon_2} = N_2$ and the proof is similar to that in Case (1); indeed,  being $N_1$ and $N_2$ non-isomorphic, slope stable $h$-Ulrich line bundles with the same $h$-slope, by \cite[Lemma\,4.2]{c-h-g-s} and by $\dim({\rm Ext^1}(N_2,N_1)) = h^1(N_1-N_2) = a+2 \geqslant 2 >0$, $\cG_2$ is  simple, in particular indecomposable.

We therefore take $r \geqslant 3$ and proceed by induction, from which we may assume $\cG_{r-1}$ to be indecomposable; moreover, from Claim \ref{cl:ext}, $[(g_r)] \in {\rm Ext}^1 (\cG_{r-1},N_{\epsilon_r})$ general gives rise to an unsplitting sequence. Assume by contradiction that the associated middle-term $\cG_r$ is decomposable, i.e. of the form $\cG_r = A_k \oplus B_{r-k}$, for some vector bundles s.t. $1 \leqslant k:={\rm rk}(A_k), \; r-k: = {\rm rk}(B_{r-k}) < r$. 

Considering $(g_r)$ above, since $ N_{\epsilon_r}$ is a line bundle, at least one of the two summands of $\cG_r$ must surject onto $ N_{\epsilon_r}$; assume this is $A_k$. If  $k= {\rm rk} (A_k) =1$ then $A_k \cong N_{\epsilon_r}$, being both line bundles, so we would have 
\begin{eqnarray*}
    \xymatrix@R=0.1pc{ N_{\epsilon_r} \cong A_k  \ar@{^{(}->}[r]\ar[r]^{\;\; \; \;j_{A_k}}& \cG_r \ar@{->>}[r]^{p_r} & N_{\epsilon_r}\\}
 \end{eqnarray*}where $j_{A_k}$ denotes the natural inclusion of $A_k$ as a summand of $\cG_{r}$; so there would exist $\alpha \in \mathbb C^{*}$ s.t. $\alpha (p_r\circ j_{A_K})=Id_{N_{\epsilon_r}}$ contradicting that $(g_r)$ is unsplitting. Assume therefore $k = {\rm rk} (A_k) > 1$ and consider the following diagram: 
\begin{eqnarray*}
    \xymatrix{  &&0\ar[d] &\\
     &&B_{r-k}\ar[d]^{j_{B_{r-k}}}\ar@{-->}[rd]^{p_{r_{|B_{r-k}}}} &\\
    0 \ar[r]  &\cG_{r-1} \ar[r] ^{\hspace{-9mm}\iota_r} & \cG_r = A_k \oplus B_{r-k}  \ar@{->>}[d]^{\pi_{A_k}}\ar@{->>}[r]^{\hspace{9mm}p_r} & N_{\epsilon_r}  \ar@{=}[d]^{Id_{N_{\epsilon_r}}}\\
 0 \ar[r]  &\cL_{k-1}:= Ker({p_r}_{|A_k}) \ar[r] &  A_k \ar@{->>}[r]^{{p_r}_{|A_k}}  \ar[d] & N_{\epsilon_r} \\
 &&0&}
 \end{eqnarray*}If ${p_r}_{|B_{r-k}}=0$, then $B_{r-k} \subseteq Ker(p_r)=\cG_{r-1}$ and $p_r= {p_r}_{|A_k} \circ \pi_{A_k}$, which gives contradiction because 
$Ker(p_r) = \cG_{r-1}$ is indecomposable by inductive step, whereas $Ker({p_r}_{|A_k} \circ \pi_{A_k}) = \cL_{k-1} \oplus B_{r-k}$. If otherwise ${p_r}_{|B_{r-k}}\neq 0$, then $Im({p_r}_{|B_{r-k}}) \subseteq N_{\epsilon_r}$ so there exists an effective divisor $D \geqslant 0$ such that $Im({p_r}_{|B_{r-k}}) \cong 
N_{\epsilon_r}(-D)$. Now 
$\cG_{r-1} = Ker(p_r) =\left\{(a,b) \in \cG_r =   A_k \oplus B_{r-k} \; | \; p_r(b) = - p_r(a) \in N_{\epsilon_r} \right\}$; since $\pi_{A_k}$ and ${p_r}_{|A_k}$ are by assumption both surjective onto $N_{\epsilon_r}$ and since from the diagram $p_r= {p_r}_{|A_k} \circ \pi_{A_k}$, then for any $b \in B_{r-k}$ s.t. $p_r(b) \in N_{\epsilon_r}(-D)$, there always exist $a \in A_k$ s.t. 
$- {p_r}_{|A_k} (a) = {p_r}_{|B_{r-k}}(b) \in N_{\epsilon_r}(-D)$, i.e. $\cG_{r-1} = Ker(p_r) \cong \mathcal L'_{k-1} \oplus B'_{r-k}$ for some $\mathcal L'_{k-1} \subset A_k$ and $B'_{r-k} \subseteq B_{r-k}$, once again contradicting 
that $\cG_{r-1}$ is indecomposable by induction, concluding the proof of the claim.  
\end{proof} We are left with the following

\begin{claim}\label{cl:noiso} For any $r \geqslant 2$, given $[(g_r)] \neq [(\widehat{g_r}] \in {\rm Ext}^1 (\cG_{r-1},N_{\epsilon_r})$ general points, then the corresponding middle-term vector bundles $\cG_r$ and $\widehat{\cG}_r$ are not isomorphic. 
\end{claim}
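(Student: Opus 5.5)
Plan: pass from extension classes to the middle terms and show that an isomorphism $\cG_r\cong\widehat{\cG}_r$ forces $[(g_r)]$ and $[(\widehat{g_r})]$ to be proportional. General points of a projective space of extensions should be taken as distinct points of $\Pp({\rm Ext}^1(N_{\epsilon_r},\cG_{r-1}))$, i.e.\ non-proportional classes, since proportional classes have isomorphic middle terms. The key is to show that any isomorphism $\varphi:\cG_r\to\widehat{\cG}_r$ respects the two filtrations, i.e.\ $\varphi(\iota_r(\cG_{r-1}))=\widehat{\iota}_r(\cG_{r-1})$, and that it induces scalar multiples on $\cG_{r-1}$ and on $N_{\epsilon_r}$ (up to an automorphism of $\cG_{r-1}$ that acts trivially on Ext). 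Then the standard functoriality of Yoneda classes gives $[(\widehat{g_r})]=\lambda[(g_r)]$ for some $\lambda\in\mathbb C^*$, contradicting their genericity and distinctness.

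\textbf{Case $r=2$.} Here $\cG_1=N_1$ and $N_{\epsilon_2}=N_2$ are non-isomorphic, slope-stable line bundles of the same slope. Compose $\widehat{p}_2\circ\varphi\circ\iota_2:N_1\to N_2$. This is a morphism between non-isomorphic stable bundles of the same slope, hence zero. So $\varphi(N_1)\subseteq N_1$, and $\varphi$ induces endomorphisms of $N_1$ and $N_2$, i.e.\ nonzero scalars $\lambda_1,\lambda_2$ (they are nonzero since $\varphi$ is an isomorphism). Pushing out and pulling back gives $\lambda_1[(g_2)]=\lambda_2[(\widehat{g_2})]$, which is a contradiction. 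Equivalently, one may invoke simplicity of $\cG_2$ via \cite[Lemma\,4.2]{c-h-g-s} and the count $h^0(\cG_2\otimes\widehat{\cG}_2^\vee)$.

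\textbf{Case $r\geqslant3$, by induction.} Consider $\widehat{p}_r\circ\varphi\circ\iota_r:\cG_{r-1}\to N_{\epsilon_r}$. Since $\cG_{r-1}$ is semistable and $S$-equivalent to $N_1^{\oplus r_{N_1}}\oplus N_2^{\oplus r_{N_2}}$, such a map can be nonzero only through a quotient isomorphic to $N_{\epsilon_r}$. By construction, the last quotient of $\cG_{r-1}$ is $N_{\epsilon_{r-1}}\neq N_{\epsilon_r}$. I would therefore compute $\hom(\cG_{r-1},N_{\epsilon_r})$ inductively using $(g_{r-1})$, and try to prove that it vanishes for general choices. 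The first term vanishes: $\hom(N_{\epsilon_{r-1}},N_{\epsilon_r})=0$. The remaining term $\hom(\cG_{r-2},N_{\epsilon_r})$ maps to ${\rm Ext}^1(N_{\epsilon_{r-1}},N_{\epsilon_r})$ via the coboundary, and I would aim to show that this coboundary is injective for a general class $[(g_{r-1})]$.

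Granting this vanishing, $\varphi$ preserves the subbundle $\cG_{r-1}$ and induces a scalar on $N_{\epsilon_r}$ together with an automorphism $\psi$ of $\cG_{r-1}$. If $\cG_{r-1}$ is simple, then $\psi$ is a scalar and we conclude as in the case $r=2$. Otherwise, I would use the induction hypothesis that the general $\cG_{r-1}$ has ${\rm Aut}$ acting on ${\rm Ext}^1(N_{\epsilon_r},\cG_{r-1})$ through a group of dimension smaller than $\dim{\rm Ext}^1\geqslant r-1$, by Claim \ref{cl:ext}. Then general orbits have positive codimension, so two general classes lie in different orbits, and so the middle terms are not isomorphic.

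\textbf{Main obstacle.} The hard step is controlling ${\rm Hom}(\cG_{r-1},N_{\epsilon_r})$ and ${\rm Aut}(\cG_{r-1})$ for the strictly semistable, possibly non-simple $\cG_{r-1}$. I would first try the coboundary-injectivity argument; failing that, I would fall back on the orbit-dimension count just described.
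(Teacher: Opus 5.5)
Your reduction step follows the paper's route and is sound. You prove ${\rm Hom}(\cG_{r-1},N_{\epsilon_r})=0$ by the coboundary argument, which is the paper's second vanishing, using ${\rm Hom}(\cG_{r-2},N_{\epsilon_r})\cong\mathbb C\cdot p_{r-2}$ from the previous step. This forces any isomorphism $\cG_r\cong\widehat{\cG}_r$ to preserve the subbundle $\cG_{r-1}$, so the two classes lie in one ${\rm Aut}(\cG_{r-1})\times{\rm Aut}(N_{\epsilon_r})$-orbit. That is a cleaner justification than the paper's appeal to a ``maximal destabilizing'' subbundle. Your cases $r=2$ and $r=3$ are also fine, since they use simplicity of $N_1$ and of $\cG_2$, and they avoid the paper's reordering when $a=0$.

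The gap is at $r\geqslant 4$.

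\textbf{Why the simple branch fails.} The option ``$\cG_{r-1}$ simple'' is not available there. Already $\cG_3$ has the nonzero nilpotent endomorphism
\[
\cG_3\stackrel{p_3}{\twoheadrightarrow}N_1\stackrel{\iota_3\circ\iota_2}{\hookrightarrow}\cG_3 ,
\]
so $\dim{\rm End}(\cG_3)\geqslant 2$.

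\textbf{Why the fallback is unsupported.} You then treat the bound $\dim{\rm Aut}(\cG_{r-1})<\dim{\rm Ext}^1(N_{\epsilon_r},\cG_{r-1})$ as an ``induction hypothesis'', but nothing you have set up provides it. The inductive statement, non-isomorphism at step $r-1$, concerns orbits in ${\rm Ext}^1(N_{\epsilon_{r-1}},\cG_{r-2})$ and says nothing about $\dim{\rm Aut}(\cG_{r-1})$. Claim \ref{cl:ext} only bounds the Ext side from below. Without an upper bound on the acting group, the orbit count cannot be closed.

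\textbf{The missing estimate, which is the core of the paper's proof.} It is an inductive bound on endomorphisms, obtained in four steps:
\begin{itemize}
\item Apply ${\rm Hom}(\cG_{r-1},-)$ to $(g_r)$ and use your vanishing to get ${\rm End}(\cG_{r-1})\cong{\rm Hom}(\cG_{r-1},\cG_r)$.
\item Apply ${\rm Hom}(-,\cG_r)$ to $(g_r)$ to get the exact sequence $0\to{\rm Hom}(N_{\epsilon_r},\cG_r)\to{\rm End}(\cG_r)\to{\rm Hom}(\cG_{r-1},\cG_r)$.
\item Non-splitting gives ${\rm Hom}(N_{\epsilon_r},\cG_r)\cong{\rm Hom}(N_{\epsilon_r},\cG_{r-1})$.
\item Descend along the filtration, alternately using ${\rm Hom}(N_1,N_2)={\rm Hom}(N_2,N_1)=0$ and non-splitting. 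The result is $\mathbb C$ for $r$ odd and $0$ for $r$ even.
\end{itemize}
Hence $\dim{\rm End}(\cG_r)\leqslant\lceil r/2\rceil$, so the acting group has dimension at most $\lceil (r-1)/2\rceil+1$. This is strictly smaller than the lower bounds of Claim \ref{cl:ext}, namely $r+1$ for $r$ even and $r-1$ for $r\geqslant 5$ odd. With this estimate added, your argument goes through.
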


\begin{proof}[Proof of Claim \ref{cl:noiso}] From \cite{S}, for any $r \geqslant 2$ the group ${\rm Aut}(\cG_{r-1}) \times {\rm Aut}(N_{\epsilon_r})$ acts by (push-out, pull-back) on ${\rm Ext}^1 (\cG_{r-1},N_{\epsilon_r})$: for any $(\alpha, \; \beta) \in {\rm Aut}(\cG_{r-1}) \times {\rm Aut}(N_{\epsilon_r})$ and any 
$[(g_r)] \in {\rm Ext}^1 (\cG_{r-1},N_{\epsilon_r})$ one has:  
$$(\alpha,\beta) ([(g_r)]):= \alpha_* \circ [(g_r)] \circ \beta^* \in {\rm Ext}^1 (\cG_{r-1},N_{\epsilon_r})$$and the corresponding middle-terms are isomorphic as rank-$r$ vector bundles. 

Conversely, assume to have two extensions $[(g_r)] \neq  [(\widehat{g_r}] \in {\rm Ext}^1 (\cG_{r-1},N_{\epsilon_r})$ for which there exists an isomorphism $\psi$ of vector bundles 
between the middle terms, namely: 
\[
\begin{array}{rcccl} 
(\widehat{g_r}):\;\; 0 \to \cG_{r-1} & \stackrel{\widehat{\iota_r}}{\longrightarrow} & \widehat{\cG}_r & \stackrel{\widehat{p_r}}{\longrightarrow} & N_{\epsilon_r} \to 0\\
 & & \;\;\; \downarrow^{\cong_\psi} & & \\
(g_r): \;\; 0 \to \cG_{r-1} & \stackrel{\iota_r}{\longrightarrow} & \cG_r & \stackrel{p_r}{\longrightarrow} & N_{\epsilon_r} \to 0\\
\end{array}
\]$\widehat{\cG}_r$ (resp., $\cG_r$) is strictly semistable and 
$\widehat{\iota_r}(\cG_{r-1})$ (resp., $\iota_r(\cG_r)$) is a {\em maximal destabilizing} sub-vector bundle, since of rank $r-1$. Therefore $\psi$ must send 
$ \widehat{\iota_r}(\cG_{r-1})$ to $\iota_r(\cG_r)$. This implies there exists $\alpha \in 
{\rm Aut} (\cG_{r-1})$ such that $\psi \circ \widehat{\iota_r} = \psi_{|_{\cG_{r-1}}} = \iota_r \circ \alpha$. Since $\frac{\widehat{\cG}_r}{\widehat{\iota_r}(\cG_{r-1})}\cong N_{\epsilon_r} \cong \frac{\cG_r}{\iota_r(\cG_{r-1})}$, then $\psi$ induces an automorphism $\beta:= \overline{\psi} \in {\rm Aut}(N_{\epsilon_r}) \cong \mathbb C^*$ which let the previous diagram commute, namely $\overline{\psi} \circ \widehat{p_r} = p_r \circ \psi$; in other words 
$[(\widehat{g_r})]$ and $[(g_r)]$ are in the same $\left( {\rm Aut}(\cG_{r-1}) \times {\rm Aut}(N_{\epsilon_r}) \right)$-orbit. 

To prove the Claim, we therefore need to show that the {\em maximal} possible dimension for the $\left({\rm Aut}(\cG_{r-1}) \times {\rm Aut}(N_{\epsilon_r})\right)$-orbits is strictly less than $\dim( {\rm Ext}^1 (\cG_{r-1},N_{\epsilon_r})$. This will be done by induction. For $r=2$, recall that $\cG_{r-1} = \cG_1: = N_1$, $N_{\epsilon_2} = N_2$, so $ {\rm Aut}(N_1) \times {\rm Aut}(N_{2})$ has dimension $2$ whereas, from above, $\dim({\rm Ext}^1(N_2, N_1)) = h^1(N_1-N_2) = a+2$, strictly bigger than $2$ unless $a=0$. In this latter case, from the assumptions on $(0,b)$, one must have $b \geqslant 2$ so we change  the order of $N_1$ and $N_2$ considering $ {\rm Ext}^1(N_1, N_2) \cong H^1(N_2-N_1)$, whose elements gives rise to rank-two, $h$-Ulrich, simple vector bundles $\cG'_2$ of same Chern classes and slope as $\cG_2$, to which Claim \ref{cl:unsp} applies verbatim, i.e. $\cG'_2$ general is indecomposable. With this permutation, $\dim({\rm Ext}^1(N_1, N_2)) = h^1(N_2-N_1) = b+2 \geqslant 4$ and we are done also for $a=0$. Observe further that, taking the dual sequence of $(g_2)$ tensored by $N_2$, one gets
$0  \to N_2\otimes N^{\vee}_2 \cong \mathcal O_X \to N_2 \otimes \cG_2^{\vee} \to N_2 \otimes N^{\vee}_1 \to 0$, from which one deduces 
\begin{equation}\label{eq:aiuto3}
{\rm Hom}(\cG_2,N_2) \cong H^0(N_2 \otimes \cG_2^{\vee}) \cong H^0(\mathcal O_X) \cong {\rm Hom}(N_2, N_2) \cong \mathbb C,
\end{equation}which will be used below for the inductive procedure. 

Let us assume now $r \geqslant 3$, for which one has 
\begin{equation}\label{eq:aiuto(a)}
{\rm Hom} (N_{\epsilon_r}, \cG_{r-1}) \cong  {\rm Hom} (N_{\epsilon_r}, \cG_{r-2}) \; {\rm whereas} \; {\rm Hom} (\cG_{r-1}, N_{\epsilon_r} ) = (0), 
\end{equation}where $\cG_1 := N_1$ when $r-2=1$. The first isomorphism easily follows from $(g_{r-1})$ tensored by $N^{\vee}_{\epsilon_r}$, taking into account that $N_{\epsilon_r}$ and $N_{\epsilon_{r-1}}$ have different parity so $h^0(N_{\epsilon_{r-1}} \otimes N^{\vee}_{\epsilon_{r}}) = 0$. Concerning the second equality, it follows from the dual sequence of $(g_{r-1})$ tensored by $N_{\epsilon_r} = N_{\epsilon_{r-2}}$, where $h^0(N^{\vee}_{\epsilon_{r-1}} \otimes N_{\epsilon_{r-2}}) = 0$ because of different parity; indeed if $r=3$, then $ H^0( \cG^{\vee}_{r-2} \otimes N_{\epsilon_{r-2}} ) \cong {\rm Hom}(N_1, N_1) \cong \mathbb C$; whereas for $r = 4$, $ H^0( \cG^{\vee}_{r-2} \otimes N_{\epsilon_{r-2}} ) \cong {\rm Hom} (\cG_2, N_2) \cong {\rm Hom}(N_2, N_2) \cong \mathbb C$ as from \eqref{eq:aiuto3}; by induction and iterating the reasoning, from the previous exact sequence one has 
{\small $$H^0(\cG^{\vee}_{r-2} \otimes N_{\epsilon_{r-2}} ) \cong {\rm Hom}(N_{\epsilon_{r-2}}, N_{\epsilon_{r-2}}) \cong \mathbb C \stackrel{\partial}{\longrightarrow} H^1(N^{\vee}_{\epsilon_{r-1}} \otimes N_{\epsilon_{r-2}}) \cong {\rm Ext}^1(N_{\epsilon_{r-1}}, N_{\epsilon_{r-2}}),$$}where $\partial$ is the coboundary map, which is injective by non-splitting sequence. This injectivity implies 
${\rm Hom} (\cG_{r-1}, N_{\epsilon_r} ) = H^0( \cG^{\vee}_{r-1} \otimes N_{\epsilon_{r-2}}) \cong H^0(N^{\vee}_{\epsilon_{r-1}} \otimes N_{\epsilon_{r-2}}) = (0)$, the latter equality following by different parity, proving the second equality in \eqref{eq:aiuto(a)}.

Now $(g_r)$ tensored by $\cG^{\vee}_{r-1}$ gives 
$0 \to {\rm End}(\cG_{r-1}) \to  {\rm Hom}(\cG_{r-1}, \cG_r) \to {\rm Hom} (\cG_{r-1}, N_{\epsilon_r})$ so, from the vanishing in \eqref{eq:aiuto(a)}, we get 
\begin{equation}\label{eq:aiuto(e)}
{\rm End}(\cG_{r-1}) \cong {\rm Hom}(\cG_{r-1}, \cG_r)
\end{equation} Taking now the dual sequence of $(g_r)$ tensored by $\cG_{r}$ and passing to cohomology gives 
$$0 \to  {\rm Hom}(N_{\epsilon_r}, \cG_r) \to {\rm End}(\cG_r) \to {\rm Hom}(\cG_{r-1}, \cG_r) 
\cong {\rm End}(\cG_{r-1}) \to \cdots,$$the isomorphism on the right-side following from \eqref{eq:aiuto(e)}.  In other words, for any $r \geqslant 3$, one has that  
\begin{equation}\label{eq:ineq}
1 \leqslant \dim({\rm End}(\cG_r)) \leqslant \dim({\rm Hom}(N_{\epsilon_r}, \cG_r)) + \dim ({\rm End}(\cG_{r-1})).
\end{equation}We therefore need to compute $\dim({\rm Hom}(N_{\epsilon_r}, \cG_r))$. Observe that, for any $r \geqslant 2$, one has also 
\begin{equation}\label{eq:aiuto(c)}
{\rm Hom}(N_{\epsilon_r}, \cG_r) \cong {\rm Hom}(N_{\epsilon_r}, \cG_{r-1});
\end{equation}indeed, considering $(g_r)$ then, for any $f \in {\rm Hom}(N_{\epsilon_r}, \cG_r)$, one must have $p_r \circ f = 0$ otherwise $(g_r)$ would be a splitting sequence, against assumptions. In other words, $Im(f) \subseteq Ker(p_r) = \cG_{r-1})$, i.e. $f \in  {\rm Hom}(N_{\epsilon_r}, \cG_{r-1})$. Thus, from \eqref{eq:aiuto(c)}, we are reduced to compute $\dim({\rm Hom}(N_{\epsilon_r}, \cG_{r-1}))$ for any $r \geqslant 2$. By recursively applying the first isomorphism in \eqref{eq:aiuto(a)}, if $r = 2k+1$ is odd, then $N_{\epsilon_{r}} = N_1$ so
{\small \begin{equation}\label{eq:aiuto(b)odd}
{\rm Hom}(N_{\epsilon_r}, \cG_{r-1}) = {\rm Hom} (N_1, \cG_{2k}) \cong  {\rm Hom} (N_1, \cG_{2k-1}) \cong \cdots \cong {\rm Hom} (N_1, N_1) \cong \mathbb C. 
\end{equation}}When otherwise $r = 2k$ is even, then $N_{\epsilon_{r}} = N_2$ and, from recursive application of the first isomorphism in \eqref{eq:aiuto(a)}, we get
{\small \begin{equation}\label{eq:aiuto(b)even}
{\rm Hom}(N_{\epsilon_r}, \cG_{r-1}) = {\rm Hom} (N_2, \cG_{2k-1}) \cong  {\rm Hom} (N_2, \cG_{2k-2}) \cong \cdots \cong {\rm Hom} (N_2, N_1) = (0). 
\end{equation}}Putting together \eqref{eq:aiuto(c)}, \eqref{eq:aiuto(b)odd} and 
\eqref{eq:aiuto(b)even}, we get 

$$\dim({\rm Hom}(N_{\epsilon_r}, \cG_r)) = \begin{cases}
    1  & \mbox{if $r$ is odd}, \\
     0 & \mbox{if $r$ is even}
    \end{cases}$$ which, together with \eqref{eq:ineq}, gives
\begin{equation}\label{eq:aiuto(f)}
1 \leqslant \dim({\rm End}(\cG_r)) \leqslant \begin{cases}
    \dim({\rm End}(\cG_{r-1})) +1 & \mbox{if $r\geqslant 3$ is odd}, \\
     \dim({\rm End}(\cG_{r-1})) & \mbox{if $r$ is even}.
    \end{cases}
\end{equation}Since, for $r=2$, $\dim({\rm End}(\cG_2)) =1$ because $\cG_2$ is simple, one deduces from \eqref{eq:aiuto(f)} that, for any $r \geqslant 2$, one has  $1 \leqslant \dim({\rm End}(\cG_r)) = \dim ({\rm Aut}(\cG_r)) \leqslant \lceil \frac{r}{2} \rceil$. 

We deduce therefore that $$\dim\left({\rm Aut}(\cG_{r-1}) \times {\rm Aut}(N_{\epsilon_r})\right) \leqslant\lceil \frac{r-1}{2} \rceil + 1.$$

Hence, if $r = 2k \geqslant 4$ is even, 
$\dim\left({\rm Aut}(\cG_{r-1}) \times {\rm Aut}(N_{\epsilon_r})\right) \leqslant (k-1) + 1 = k$ whereas, from Claim \ref{cl:ext}, $\dim({\rm Ext}^1(\cG_{r-1}, N_{\epsilon_r})) \geqslant 2k+1$. 

When otherwise $r= 2k+1 \geqslant 3$ is odd,  $\dim\left({\rm Aut}(\cG_{r-1}) \times {\rm Aut}(N_{\epsilon_r})\right) \leqslant k + 1$ whereas from above we have $\dim({\rm Ext}^1(\cG_{r-1}, N_{\epsilon_r})) \geqslant 2k$. Thus, except for $k=1$ i.e. $r=3$, the latter is strictly bigger than $k+1$. We are left with the case $r=3$, where we can improve dimensional counting as in Claim \ref{cl:ext}; indeed, from \eqref{eq:boundutile}, 
$\dim({\rm Ext}^1(\cG_2, N_1)) = h^1(\cG_{2} \otimes  N_1^{\vee})=  h^0(\cG_2 \otimes  N_1^{\vee})+h^2(\cG_2 \otimes  N_{1}^{\vee})-\chi(\cG_{2} \otimes  N_{1}^{\vee})$, where 
$-\chi(\cG_{2} \otimes  N_{1}^{\vee})\geqslant 2$ as computed in the proof of Claim \ref{cl:ext} but also $h^0(\cG_2 \otimes  N_1^{\vee}) = 1$ as it follows from $(g_2)$ tensored with $ N_1^{\vee}$, i.e. $\dim({\rm Ext}^1(\cG_2, N_1))= 1 + h^2(\cG_2 \otimes  N_{1}^{\vee}) - \chi(\cG_{2} \otimes  N_{1}^{\vee}) \geqslant 1 + 2 = 3$, whereas $\dim\left({\rm Aut}(\cG_{2}) \times {\rm Aut}(N_{1})\right) = 2$, completely proving the claim.  
\end{proof}

Since we have produced on $X$ families of indecomposable, pairwise non-isomorphic, $h$-Ulrich bundles of rank $r \geqslant 2$ and from $uc_h(X) =1$ in \textsc{Theorem A}-(2), $X$ is {\em $h$-Ulrich wild} with $Ur(X) = \mathbb N^*$, i.e. there are no indecomposable-$h$-Ulrich rank gaps (even if there are {\em cohomological obstructions} to produce positive dimensional modular components in any rank). This completes the proof of part (2).  
\end{proof}

\end{document}